\newcommand\be{\begin{equation}}
\newcommand\ee{\end{equation}}
\newcommand\bea{\begin{eqnarray}}
\newcommand\eea{\end{eqnarray}}
\newcommand\bi{\begin{itemize}}
\newcommand\ei{\end{itemize}}
\newcommand\ben{\begin{enumerate}}
\newcommand\een{\end{enumerate}}
\newcommand\bc{\begin{center}}
\newcommand\ec{\end{center}}
\newcommand\ba{\begin{array}}
\newcommand\ea{\end{array}}
\newtheorem{thm}{Theorem}[section]
\newtheorem{conj}[thm]{Conjecture}
\newtheorem{lem}[thm]{Lemma}
\theoremstyle{definition}
\newtheorem{rek}[thm]{Remark}
\begin{document}

\title{The postage stamp problem and essential subsets in integer bases}

\author{Peter Hegarty}
\email{hegarty@math.chalmers.se} \address{Mathematical Sciences, Chalmers University Of Technology and G\"oteborg University, G\"oteborg, Sweden}

\subjclass[2000]{11B13 (primary), 11B34 (secondary).} \keywords{Additive basis, essential subset.}

\date{\today}

\begin{abstract} Plagne recently determined the asymptotic behavior of the function
$E(h)$, which counts the maximum possible number of essential elements in an additive basis for $\mathbb{N}$ of order $h$. Here we extend his
investigations by studying asymptotic behavior of the function $E(h,k)$, which counts the maximum possible number of essential subsets of size $k$,
in a basis of order $h$. For a fixed $k$ and with $h$ going to infinity, we show that $E(h,k) = \Theta_{k} \left( [h^{k}/\log h]^{1/(k+1)} \right)$. The
determination of a more precise asymptotic formula is shown to depend on the solution of the well-known $\lq$postage stamp problem' in finite cyclic
groups. On the other hand, with $h$ fixed and $k$ going to infinity, we show that $E(h,k) \sim (h-1) {\log k \over \log \log k}$.
\end{abstract}

%\pagebreak

\maketitle

\setcounter{equation}{0}
    %=======================================================
    %   You need the setcounter command above to reset the
    %   equation counting in each section. without it, you'd
    %   have 1.1, 1.2, 1.3, 2.4, 2.5 -- with it (you put it
    %   after each new section) you have 1.1, 1.2, 1.3, 2.1,
    %========================================================

%%%%%%%%%%%%%%%%%%%%%%%%%%%%%%%%%%%%%%%%%%%%%%%%%%%%%%%%%%%%%%%%%%%%%%%%%%%%%%%%%%%%%%%%%%%%%
%%%%%%%%%%%%%%%%%%%%%%%%%%%%%%%%%%%%%%%%%%%%%%%%%%%%%%%%%%%%%%%%%%%%%%%%%%%%%%%%%%%%%%%%%%%%% \setcounter{equation}{0}

\section{Essential subsets of integer bases}

Let $S$ be a countable abelian semigroup, written additively, $h$ be a positive integer and $A \subseteq S$. The $h$-fold sumset $hA$ consists of all
$s \in S$ which can be expressed as a sum of exactly $h$ not necessarily distinct elements of $A$. If $S$ is infinite, we write $hA \sim S$ if all but
finitely many elements of $S$ lie in $hA$. In that case, $A$ is said to be a {\em basis of order $h$}{\footnote{In the literature, the term {\em
asymptotic basis} is common.} if $hA \sim S$ but $(h-1)A \not\sim S$. If $S$ is finite, then a basis $A$ of order $h$ must satisfy $hA = S$ and $(h-1)A
\neq S$. The two semigroups of interest in this paper (and in most of the additive number theory literature) are $S = \mathbb{N}$, the set of positive
integers, and $S = \mathbb{Z}_{n}$, the set of residue classes modulo a positive integer $n$.
\\
\\
Now suppose $A$ is a basis of some order for $\mathbb{N}$, a so-called {\em integer basis}. A finite subset $E$ of $A$ is said to be an {\em essential
subset} of $A$ if $A \backslash E$ is no longer a basis of any order, and the set $E$ is minimal with this property. In the case when $E$ is a
singleton set, $E = \{a\}$ say, we say that $a$ is an {\em essential element} of $A$.
\\
\\
A fundamental result of Erd\H{o}s and Graham \cite{EG} states that every integer basis possesses only finitely many essential elements. Grekos \cite{G}
refined this observation by showing that the number of essential elements in a basis of order $h$ is bounded by a function of $h$ only. Let $E(h)$
denote the maximum possible number of essential elements in a basis of order $h$. Two recent papers have left us with a very good understanding of this
function. In 2007, Deschamps and Farhi \cite{DF} proved that
\be
E(h) \leq c \sqrt{{h \over \log h}},
\ee
with $c = 30 \sqrt{{\log 1564 \over 1564}}
\approx 2.05$, and gave an example to show that this is the best-possible universal constant. That left the question of asymptotic behavior and, in
2008, Plagne \cite{P} completed the picture by showing that
\be
E(h) \sim 2 \sqrt{{h \over \log h}}.
\ee
Most of his paper was in fact devoted to
verifying that the asymptotic behavior of $E(h)$ is regular.
\\
\\
Deschamps and Farhi appear to be the first people to study essential subsets in an integer basis of arbitrary size. They generalized the
Erd\H{o}s-Graham result by showing that any basis possesses only finitely many essential subsets. However, the number of these cannot be bounded purely
in terms of the order of the basis, as the following example from their paper shows. Let $s \geq 1$ and $p_1,...,p_s$ denote the first $s$ prime numbers.
Put $\mathscr{P} := \prod_{i=1}^{s} p_{i}$ and take
\be A = \mathscr{P} \cdot \mathbb{N} \cup \{1,2,...,\mathscr{P}-1\}
\ee
Clearly $A$ is a basis of order 2, but it possesses $s$
different essential subsets, namely the sets
\be
E_i = \{x \in \{1,...,\mathscr{P}-1\} : (x,p_i) = 1 \}, \;\;\;\; i = 1,...,s.
\ee
Note, however, that as $s$ increases in this example, so do the sizes of the essential subsets $E_i$ (and drastically so !). Deschamps and Farhi suggested that the right
generalisation of (1.1) would be an upper bound for the number of essential subsets of a given size in a basis of a given order. In other words, the function
$E(h,k)$, which  denotes the maximum possible number of essential subsets of size $k$ in an integer basis of order $h$, should be well-defined.
In \cite{He} the
present author proved that this is the case, but made no attempt to obtain precise estimates. Motivated by Plagne's subsequent work, we will in this paper
prove the following two results :

\begin{thm}
For each fixed $h > 0$, as $k \rightarrow \infty$ we have
\be
E(h,k) \sim (h-1) {\log k \over \log \log k}.
\ee
\end{thm}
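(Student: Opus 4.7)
The plan is to prove the upper and lower bounds separately. For the upper bound, the first step is to associate to each essential subset $E_i$ (of size $k$) of $A$ a prime $p_i$ such that $A \setminus E_i$ is contained in a single residue class modulo $p_i$; this follows from the fact that $A \setminus E_i$ fails to be a basis, hence must exhibit a common congruence obstruction, and by passing to a prime divisor of the associated modulus. A minimality argument shows the primes $p_i$ are pairwise distinct across distinct essential subsets. Setting $\mathscr{P} := \prod_i p_i$, the Chinese Remainder Theorem identifies a single ``main class'' $r_0$ modulo $\mathscr{P}$ containing all but at most $\sum_i |E_i| = sk$ exceptional elements; let $\Delta$ denote this exceptional set. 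Since $A$ is a basis of order $h$, one has $hA \supseteq \mathbb{Z}/\mathscr{P}\mathbb{Z}$, which translates into the condition that $\{0\} \cup (\Delta - r_0)$ is an order-$h$ basis of $\mathbb{Z}/\mathscr{P}\mathbb{Z}$. The cyclic postage-stamp bound --- sharpened so as to exploit the special structure of $\Delta$ inside the CRT factorisation $\prod_i \mathbb{Z}/p_i\mathbb{Z}$ --- should then yield an estimate of the form $\mathscr{P} \leq C_h \cdot (sk)^{h-1}$. Combining with the primorial estimate $\log \mathscr{P} \geq (1+o(1)) s \log s$ and taking logarithms gives $s \log s \leq (h-1) \log(sk) + O(1)$, from which $s \leq (h-1+o(1)) \log k / \log \log k$.

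For the lower bound, I would exhibit an explicit basis achieving the asymptotic. Choose primes $p_1 < \cdots < p_s$ in a small window around $K \sim \log k$ with $s \sim (h-1)\log k/\log \log k$, so that $\mathscr{P} := \prod p_i$ is of order $k^{h-1}$. Define $A := \mathscr{P}\mathbb{N} \cup B$, where $B \subseteq \{1,\ldots,\mathscr{P}-1\}$ is chosen so that $\{0\} \cup B$ is an order-$h$ basis of $\mathbb{Z}/\mathscr{P}\mathbb{Z}$ and $|\{b \in B : p_i \nmid b\}| = k$ for every $i$. The existence of such a $B$ is the main combinatorial step: one starts with a near-extremal order-$h$ basis for $\mathbb{Z}/\mathscr{P}\mathbb{Z}$ supplied by the postage-stamp problem and then adjusts by local swaps and additions to equalise the sizes of the $E_i := \{b \in B : p_i \nmid b\}$. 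A direct verification then confirms that each $E_i$ is essential of size exactly $k$: removing $E_i$ forces $A \setminus E_i$ into a single class modulo $p_i$, whereas reinserting any single element $e \in E_i$ produces two distinct residues modulo the prime $p_i$, which generate $\mathbb{Z}/p_i\mathbb{Z}$ and thus restore the basis property.

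The main obstacle, both in the upper and the lower bound, is pinning down the constant $h-1$ rather than the more easily obtained $h$. On the upper-bound side this requires the sharpened cyclic postage-stamp estimate described above, which must leverage the observation that when $s \gg h$ the exceptional set $\Delta$ modulo $\mathscr{P}$ cannot consist solely of ``axis-like'' elements (supported on a single CRT coordinate), since such a set cannot $h$-cover targets of full support when $s > h$; the elements of $\Delta$ must therefore be widely supported, and this structural rigidity is what reduces the effective sumset exponent by one. On the lower-bound side, the dual obstacle is to produce a basis $B$ that attains, up to bounded error, the postage-stamp threshold while simultaneously yielding $E_i$ of exactly the same size $k$ for every $i$; I expect this delicate balancing argument, together with the refined postage-stamp estimate, to constitute the technical heart of the proof.
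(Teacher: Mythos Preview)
Your overall strategy---upper bound by reducing to a postage-stamp estimate in $\mathbb{Z}/\mathscr{P}\mathbb{Z}$, lower bound via an explicit basis $A=\mathscr{P}\cdot\mathbb{N}\cup B$ with the $E_i$ balanced to size $k$---is exactly the paper's, and you correctly identify that the entire difficulty is landing on $h-1$ rather than $h$. But the mechanism you propose for this (a ``sharpened'' cyclic postage-stamp bound exploiting the CRT support structure of $\Delta$) is both vague and unnecessary; the paper gets $h-1$ for free by a one-line observation you have overlooked.

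Namely: the exceptional set $E=\bigcup_i E_i$ (your $\Delta$) is \emph{finite}. Hence any sufficiently large $n\in hA$, written as $n=a_1+\cdots+a_h$, must use at least one summand $a_j\in A\setminus E$, and that summand is $\equiv 0\pmod d$. So modulo $d$ the residue of $n$ is already hit by the remaining $h-1$ terms, i.e.\ $E_0=E\cup\{0\}$ is a basis for $\mathbb{Z}_d$ of order at most $h-1$. Now the crude Rohrbach bound $d\le N(h-1,ks)\le\binom{h-1+ks}{h-1}=O_h\bigl((ks)^{h-1}\bigr)$, combined with $d\ge\prod_{i\le s}p_i$, gives $s\lesssim (h-1)\log k/\log\log k$ directly. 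No structural analysis of $\Delta$ is needed.

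The same observation fixes the off-by-one in your lower bound: you should take $\{0\}\cup B$ to be a basis for $\mathbb{Z}_\mathscr{P}$ of order $h-1$, not $h$ (with your stated choice, $A=\mathscr{P}\cdot\mathbb{N}\cup B$ would have order $h+1$, since some residue class needs $h$ nonzero elements of $B$ plus one multiple of $\mathscr{P}$). Once you target order $h-1$, Rohrbach's constructive bound $n(h-1,k)\ge(k/(h-1))^{h-1}$ furnishes such a $B$ with $|B|\le(h-1)\mathscr{P}^{1/(h-1)}\le k$, and the paper chooses $s$ as the largest integer with $(hs)\cdot\mathscr{P}^{1/(h-1)}\le k$ (taking $\mathscr{P}$ to be the product of the first $s$ primes), which yields $s\sim(h-1)\log k/\log\log k$. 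The size-equalisation step you sketch is then carried out explicitly by adding, for each $i$, a computed number of elements divisible by $p_i$ but coprime to every other $p_j$, the counts determined by an $s\times s$ linear system.
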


\begin{thm}
Let the function
$f(h,k)$ be given by
\be
f(h,k) := {k+1 \over k^2} \cdot \sqrt[k+1]{k} \cdot \left( {h^{k} \over \log h} \right)^{{1 \over k+1}}.
\ee
Then, for each fixed $k$, as $h \rightarrow \infty$ we have
\\
(i)
\be E(h,k) \gtrsim f(h,k).
\ee
(ii) There is a number $\underline{R}(k) \in (1/e,1)$, to be defined below, such that
\be
E(h,k) \lesssim \left( {1 \over \underline{R}(k)} \right)^{{k \over k+1}} f(h,k).
\ee
\end{thm}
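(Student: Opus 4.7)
The plan is to tie both bounds to the \emph{postage stamp problem} in cyclic groups, as foreshadowed in the abstract. Let $n(h,k)$ denote the largest integer $n$ for which some $k$-element subset $T \subset \mathbb{Z}_n \setminus \{0\}$ makes $T \cup \{0\}$ a basis of order $h$ for $\mathbb{Z}_n$; classically $n(h,k) \asymp h^k$ with constants depending on $k$, and the quantity $\underline{R}(k)$ flagged in (1.8) will be an appropriately normalized liminf of $n(h,k)/h^k$. As a sanity check, for $k=1$ one has $n(h,1)=h+1$ and $\underline{R}(1)=1$, so that $f(h,1)=2\sqrt{h/\log h}$ indeed matches Plagne's formula (1.2).

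For the lower bound (1.7), I would generalize the Deschamps--Farhi construction (1.3)--(1.4). Fix a parameter $P$ and a sub-order $h_0$ with $P \le n(h_0,k)$. For each prime $p$ in a suitable window near $P$, choose a witness $T_p \subset \mathbb{Z}_p$ realising the postage stamp bound, set $N=\prod p$, and build $A \subset \mathbb{N}$ by lifting via CRT the sets $T_p\cup\{0\}$ from each $p$ (throwing in one or two universal elements to secure overall coverage of $\mathbb{N}$). The analysis then shows (a) that $A$ is a basis of some order depending on $h_0$ and the number $s \sim P/\log P$ of primes used, and (b) that for each chosen $p$, the $k$ lifts of $T_p$ form a distinct essential subset of $A$ of size $k$. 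Calibrating $h_0$ and $P$ to meet the order constraint $\le h$ while maximizing $s$ is a Lagrange-multiplier calculation on the logarithmic exponents, and produces the explicit constant $\tfrac{k+1}{k^2}\,k^{1/(k+1)}$ in $f(h,k)$.

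For the upper bound (1.8), let $A$ be a basis of order $h$ with essential subsets $E_1,\ldots,E_N$ of size $k$. Following the Erd\H{o}s--Graham--Plagne style of analysis, each $E_i$ must ``block a congruence'': there exists a prime $p_i$ such that $A \bmod p_i$ is a basis of order $\le h$ for $\mathbb{Z}_{p_i}$ while $(A \setminus E_i) \bmod p_i$ is not. The $k$ residues of $E_i \bmod p_i$ are exactly what makes up the difference, so the defining property of $n(h,k)$ forces $p_i \lesssim n(h,k)$, and hence $p_i \lesssim h^k/(k!\,\underline{R}(k))$. One then proves the map $E_i \mapsto p_i$ is injective, counts primes up to this postage-stamp bound, and balances against the global basis-order constraint imposed by the simultaneous congruence obstructions. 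The trade-off reproduces the formula $f(h,k)$ up to the stated factor $(1/\underline{R}(k))^{k/(k+1)}$, which records the gap between the liminf $\underline{R}(k)$ and the postage-stamp density $n(h,k)/h^k$ at the optimizing $h_0$.

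The main obstacle is in the upper bound, specifically the step that pins each essential subset $E_i$ to a \emph{single} prime $p_i$ with size sharply bounded by $n(h,k)$ rather than some looser postage-stamp variant. For $k=1$ Plagne's argument is immediate, since a lone essential residue must coincide with the unique element of $A$ in some class modulo $p_i$, forcing $p_i \le h+1$. For $k \ge 2$ the $k$ residues of $E_i \bmod p_i$ may overlap with, or interact nontrivially among, those of $(A \setminus E_i) \bmod p_i$, and extracting the sharp bound demands careful use of the minimality clause in the definition of essential subset---that no proper subset of $E_i$ already destroys the basis property of $A$.
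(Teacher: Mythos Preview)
Your outline has the right architecture (CRT construction for the lower bound, congruence obstructions for the upper bound), but several of the key quantitative steps are either misplaced or would give the wrong constant.

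\medskip
\textbf{Upper bound.} The individual estimate $p_i \lesssim n(h,k)$ is far too weak on its own: it only yields $s \le \pi(n(h,k)) \asymp h^{k}/\log h$, missing the target by a huge margin. The actual engine is the \emph{additive} global constraint you allude to but never write down. With $d_i := \gcd\{a-a' : a,a' \in A\setminus E_i\}$ (these are pairwise coprime, hence $d_i \ge p_i$), one chooses by CRT an integer $x$ that is simultaneously ``hard'' modulo each $d_i$, and reads off from any representation $x \in hA$ the inequality
\[
h \;\ge\; \sum_{i=1}^{s} h(d_i,k) \;\gtrsim\; k\,\underline{R}(k)\sum_{i=1}^{s} d_i^{1/k} \;\ge\; k\,\underline{R}(k)\sum_{i=1}^{s} p_i^{1/k} \;\gtrsim\; \frac{k^2\,\underline{R}(k)}{k+1}\,(s^{k+1}\log s)^{1/k}.
\]
Here $h(N,k)$ is the \emph{dual} postage-stamp quantity (smallest order of a $(k{+}1)$-element basis for $\mathbb{Z}_N$), and $\underline{R}(k) := \liminf_{N} h(N,k)/(kN^{1/k})$ --- not a liminf of $n(h,k)/h^k$ as you write; the two normalizations are not known to coincide. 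Your worry about residues of $E_i$ overlapping those of $A\setminus E_i$ modulo $p_i$ is not the obstacle: after a shift, every element of $A\setminus E_i$ is $\equiv 0 \pmod{d_i}$, so the reduction is clean.

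\medskip
\textbf{Lower bound.} Taking all primes in a window near a single $P$, each with the \emph{same} sub-order $h_0$, is suboptimal. With $s \sim P/\log P$ primes each contributing order $\sim kP^{1/k}$, one gets $h \sim k s P^{1/k}$ and hence $s \sim \bigl(\tfrac{k}{k+1}\bigr)^{k/(k+1)} f(h,k)$, short of (1.7) by a fixed factor (e.g.\ $1/\sqrt{2}$ when $k=1$, contradicting Plagne's $E(h)\sim 2\sqrt{h/\log h}$). The construction that attains $f(h,k)$ uses the \emph{first} $n$ primes, so that the contributions $k\,p_i^{1/k}$ vary and $\sum_{i\le n} p_i^{1/k} \sim \tfrac{k}{k+1}(n^{k+1}\log n)^{1/k}$ matches the upper-bound computation.

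\medskip
\textbf{Where the real work lies.} You dismiss the lower bound as a calibration, but the genuine difficulty is that $A$ must have order \emph{exactly} $h$, not $\le h$; there is no cheap monotonicity of $E(h,k)$ in $h$. The paper spends most of its effort here: one needs (a) that every large $h_1$ can be written as $\sum_{i\le n}\lfloor p_i^{1/k}\rfloor$ plus a controlled tail (an Alon--Freiman subset-sum argument combined with primes-in-short-intervals), and (b) that for each prime $p$ one can realize $(k{+}1)$-element bases of $\mathbb{Z}_p$ of \emph{every} order in a range $[k\lfloor p^{1/k}\rfloor, k\lfloor p^{1/k}\rfloor + c\lfloor p^{1/k}\rfloor]$, not just the extremal one. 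Without (a) and (b) the construction cannot be tuned to hit $h$ on the nose.
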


The problem of estimating the function $E(h,k)$ is intimately connected with the
well-known Postage Stamp Problem (PSP), this being the popular name for the general problem of finding bases which are, in some sense,
the most economical possible.  In Section 2 we present an overview of this problem
and, in particular, define the numbers $\underline{R}(k)$ appearing in (1.8) above.
Note that the exact values of these numbers are not known for any $k > 1$. Theorems 1.1 and 1.2 are proven in Sections 3 and 4 respectively.
All our proofs build on the ideas
in previous papers on this subject and are supplemented by ingredients 
of a mostly technical nature. That of Theorem 1.2 is modeled closely on Plagne's
\cite{P}. The main technical problem he faced was to show that the function $E(h,1)$ behaved regularly, and in his case this was basically due to the
unsatisfactory state of current knowledge concerning the distribution of primes in short intervals. When $k > 1$ that state of affairs continues to
create difficulties, but they will turn out to be less serious than those arising from the gaps in our current understanding of the PSP. These gaps
mean that, not only can we not compute exactly the numbers $\underline{R}(k)$, but we will be unable to prove rigorously what we strongly believe to be true, namely :

\begin{conj}
With notation as in Theorem 1.2, we have in fact that \be E(h,k) \sim \left( {1 \over \underline{R}(k)} \right)^{{k \over k+1}} f(h,k). \ee
\end{conj}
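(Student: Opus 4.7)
The upper bound in Conjecture~1.3 is already furnished by Theorem~1.2(ii), so the task is to upgrade the lower bound of Theorem~1.2(i) by the factor $(1/\underline{R}(k))^{k/(k+1)}$. My plan is to refine the construction underlying Theorem~1.2(i), which follows the template of Deschamps--Farhi and Plagne: pick distinct primes $p_1,\ldots,p_s$ and, for each $i$, a postage stamp basis $B_i \subseteq \mathbb{Z}/p_i\mathbb{Z}$ of cardinality $k+1$ and order at most $h$; then assemble a basis $A \subseteq \mathbb{N}$ of order $h$ by taking all sufficiently large integers together with a CRT-compatible preimage of $(B_1,\ldots,B_s) \subseteq \prod_i \mathbb{Z}/p_i\mathbb{Z}$. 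Each prime $p_i$ contributes exactly one essential subset of $A$ of cardinality $k$, namely the lift of a minimal $k$-subset of $B_i$ whose removal leaves some residue class modulo $p_i$ no longer $h$-represented.

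To recover the conjectural constant one needs to push each $p_i$ as large as the PSP allows: for every $i$ the modulus $p_i$ should approach the optimum $\approx (1/\underline{R}(k))\cdot h^{k+1}/k!$ compatible with a $(k+1)$-element basis of order $h$. The bookkeeping is then routine: the requirement that $A$ remain a basis of order $h$ translates into a bound on $\sum_i \log p_i$, and maximizing $s$ subject to this constraint with every $p_i$ near the PSP optimum yields $s \sim (1/\underline{R}(k))^{k/(k+1)} f(h,k)$. The factor $(1/\underline{R}(k))^{k/(k+1)}$ is exactly the gap between using a generic PSP basis, as in Theorem~1.2(i), and using near-extremal ones.

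The main obstacle is precisely what prevents us from stating the conjecture as a theorem. Because $\underline{R}(k)$ is defined via a limit inferior, near-extremal PSP bases exist for infinitely many moduli but we have no quantitative control over \emph{which} moduli admit them. For the construction to succeed we need a family of $s = s(h)$ primes $p_1,\ldots,p_s$, all lying in a narrow window around a prescribed target depending on $h$, each simultaneously supporting a $(k+1)$-element PSP basis of order at most $h(1+o(1))$. Plagne handled the corresponding issue for $k=1$ by exploiting the fact that $2$-element PSP is essentially transparent and by invoking sieve-theoretic results on primes in short intervals. For $k \geq 2$ the PSP for $(k+1)$-element bases is genuinely open --- neither $\underline{R}(k)$ itself nor the structure of near-extremal bases is understood --- and no mechanism exists for producing multiple primes in a short interval that simultaneously support near-optimal PSP bases. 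A rigorous proof of Conjecture~1.3 therefore appears to require a substantial advance on the PSP itself, beyond what techniques analogous to Plagne's can currently deliver.
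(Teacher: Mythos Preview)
Your proposal correctly recognises that Conjecture~1.3 is not proved in the paper and correctly isolates the ultimate obstacle as our lack of control over which moduli admit near-extremal PSP bases. However, your description of the construction contains substantive errors that would matter if one actually tried to carry it out.

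First, the constraint that $A$ have order $h$ is \emph{not} a bound on $\sum_i \log p_i$. In the construction actually used (equations (4.33)--(4.38) of the paper), the elements attached to different primes $p_i$ are separated by CRT, so the order of the resulting integer basis equals $\sum_i r_i$, where $r_i$ is the order of the chosen $(k+1)$-element basis in $\mathbb{Z}_{p_i}$. The relevant constraint is thus $\sum_i r_i = h$, which near-optimally reads $\sum_i h(p_i,k) \sim k\,\underline{R}(k)\sum_i \sqrt[k]{p_i} = h$.

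Second, and consequently, one does \emph{not} want all the $p_i$ pushed up to a common large value. To maximise the number $s$ of primes subject to $\sum_i h(p_i,k) \leq h$, one takes the primes as \emph{small} as possible --- essentially the first $s$ primes --- exactly as in the upper-bound computation (4.7)--(4.9) and the matching construction. Your proposed target $p_i \approx (1/\underline{R}(k))\,h^{k+1}/k!$ is both dimensionally off (the maximal modulus for a $(k+1)$-element basis of order $h$ scales like $h^k$, not $h^{k+1}$) and strategically wrong: using primes of that size would yield only $O(1)$ essential subsets.

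The paper's own account of the obstruction (the paragraph preceding Lemma~4.1, and Section~5) is accordingly somewhat different from yours: what is needed is that every sufficiently large $h$ be representable as $\sum_{p} h(p,k)$ over a set of primes close to an \emph{initial segment} of the primes, with $R(p,k)$ approaching $\underline{R}(k)$ along that set. Neither the existence of the limit $R(k)$ nor the arithmetic behaviour of the integers $h(p,k)$ (for instance, they could conceivably all be divisible by $k$) is under control, and either gap blocks the argument. Your conclusion that progress on the PSP is required is correct, but the ``routine bookkeeping'' you sketch is not the right bookkeeping.
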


We will summarise these outstanding issues in Section 5.

\setcounter{equation}{0}

\section{The Postage Stamp Problem}

For an up-to-date and much more thorough exposition of the material in this section, including an explanation of the name $\lq$Postage Stamp Problem',
see \cite{HJ2}. A more concise, but older, exposition can be found in \cite{AB}.
\\
\\
Let positive integers $h,k$ be given. The {\em postage stamp number} $n(h,k)$ is the largest integer $n$ such that there exists a $k$-element set $A$
of positive integers satisfying $hA_{0} \supseteq \{0,1,...,n\}$, where $A_0 = A \cup \{0\}$. The problem of determining the numbers $n(h,k)$ is
usually traced back to a 1937 paper of Rohrbach \cite{R}. Historically, two special cases have attracted most attention : either $h$ is fixed and $k
\rightarrow \infty$ or vice versa. The two cases seem to be about equally difficult and the current state of knowledge is about the same in both.
For our applications to essential subsets of bases, it turns out however that we can make do with much less information in the
case when $h$ is fixed. The following estimate, already proven by Rohrbach and valid for any $h$ and $k$, will suffice :
\be
\left( {k \over h} \right)^{h} \leq n(h,k) \leq \left( \begin{array}{c} h + k \\ h \end{array} \right).
\ee
The upper bound in (2.1) is obtained by a simple counting argument, and the lower bound is developed constructively. Regarding the former,
observe that for $h$ fixed and $k$ going to infinity,
\be
\left( \begin{array}{c} h+k \\ h \end{array} \right) = {k^h \over h!} + O(k^{h-1}).
\ee
Now let us turn instead to the case when $k$ is fixed and $h \rightarrow \infty$. St\"{o}hr \cite{S} proved the following analogue of Rohrbach's
estimates :
\be
\left( \lfloor {h \over k} \rfloor + 1 \right)^{k} \leq n(h,k) \leq \left( \begin{array}{c} h+k \\ k \end{array} \right).
\ee
Let{\footnote{We have not seen the numbers defined in equations (2.4), (2.5), (2.10), (2.11), (2.17) and (2.18) introduced explicitly in the existing literature on the PSP.}}
\be
s(h,k) := {1 \over k} \left( {n(h,k) \over h^{k}} \right)^{-1/k}.
\ee
For each fixed $k$, the limit
\be
s(k) := \lim_{h \rightarrow \infty} s(h,k)
\ee
is known to exist \cite{K} and it follows easily from (2.3) that, for each $k$,
\be
{1 \over e} < s(k) \leq 1.
\ee
Only three values are known :
\be
s(1) = 1, \;\;\; s(2) = 1, \;\;\; s(3) = \sqrt[3]{3/4}.
\ee
The first of these is trivial, the second due to St\"{o}hr and the third to Hofmeister \cite{Ho}. For general $k$ the best-known
lower bound on $s(k)$ tends to $1/e$ as $k \rightarrow \infty$, but for upper bounds it follows from work of Mrose \cite{M} that
\be
\limsup_{k \rightarrow \infty} s(k) \leq {1 \over \sqrt[4]{2}}.
\ee
In more recent times, the PSP has received more attention in the setting of
finite cyclic groups, partly because it can then be formulated in terms of diameters of so-called Cayley graphs, which has applications in the theory
of communication networks. We let $N(h,k)$ denote the largest integer $N$ such that there exists a $k$-element subset $A$ of $\mathbb{Z}_{N} \backslash
\{0\}$ satisfying $hA_{0} = \mathbb{Z}_{N}$, where $A_0 = A \cup \{0\}$. It is trivial that
\be
N(h,k) \geq n(h,k) - 1.
\ee
Bounds similar to (2.1) and (2.3) can be easily obtained, so that if we define
\be
S(h,k) := {1 \over k} \left( {N(h,k) \over h^k} \right)^{-1/k},
\ee
\be
\underline{S}(k) := \liminf_{h \rightarrow \infty} S(h,k), \;\;\; \overline{S}(k) := \limsup_{h \rightarrow \infty} S(h,k),
\ee
then it can be shown that
\be
1/e < \underline{S}(k) \leq \overline{S}(k) \leq 1.
\ee
When the limit exists in (2.11), we denote it $S(k)$. Existence
of the limit does not seem to be known in general. Intuitively, the reason why the numbers $N(h,k)$ are more awkward to handle than the $n(h,k)$ is as
follows : If $A$ is a set of integers such that $hA \supseteq \{0,1,...,n\}$, then naturally $hA \supseteq \{0,1,...,m\}$ for any $m < n$ also. But the
corresponding statement for $\mathbb{Z}_{n}$ and $\mathbb{Z}_{m}$ need not be true.
\\
\\
For $k=2$
it is known that the limit exists and that
\be
S(2) = \sqrt{2/3}.
 \ee
The first rigorous proof of this result seems to be in \cite{HJ1}. No other
values of $\underline{S}(k), \overline{S}(k)$ are known. Once again, no general lower bound is known which doesn't tend to $1/e$ as $k \rightarrow
\infty$. The current record for general upper bounds seems to be due to Su \cite{Su} :
\be
\limsup_{k \rightarrow \infty} \overline{S}(k) \leq \sqrt[5]{{17^{5} \over 5^{5} \cdot 7^{4}}}.
\ee
There is a natural $\lq$dual' to the numbers $N(h,k)$. This time, let $N,k$ be given positive integers,
with $N > k$. We define $h(N,k)${\footnote{The notation $d(N,k)$ is common in the literature, since these numbers can be interpreted as {\em diameters}
of Cayley graphs.}} to be the smallest positive integer $h$ such that there exists a basis for $\mathbb{Z}_{N}$ of order $h$ containing $k+1$ elements.
For applications to Cayley graphs and also, as we shall see, to essential subsets of bases, the numbers $h(N,k)$ are a more natural choice to work
with than the
$N(h,k)$. The duality between the two is expressed by the easy relations
\be
t \leq N(h(t,k),k), \;\; {\hbox{and}} \;\; h \geq h(N(h,k),k), \;\;\; {\hbox{for any $t,h,k \in \mathbb{N}$}}.
\ee
A dual to (2.3) proven by Wang and Coppersmith \cite{WC} is the double inequality
\be
\sqrt[k]{k!} \cdot N - {k+1 \over 2} \leq h(N,k) \leq k \cdot (\sqrt[k]{N} - 1).
\ee
The natural counterparts to the numbers $S(h,k), \underline{S}(k), \overline{S}(k)$ are thus
\be
R(h,k) := {h(N,k) \over k \cdot \sqrt[k]{N}},
\ee
\be
\underline{R}(k) := \liminf_{h \rightarrow \infty} R(h,k), \;\;\; \overline{R}(k) := \limsup_{h \rightarrow \infty} R(h,k).
\ee
The numbers $\underline{R}(k)$ are those appearing in Theorem 1.2. From (2.16) we have
\be
1/e < \underline{R}(k) \leq \overline{R}(k) \leq 1.
\ee
Again it is natural to conjecture that the limits always exist and then that $R(k) = S(k)$. All
we can immediately deduce from (2.15), however, is that
\be
\overline{R}(k) \geq \underline{S}(k) \;\; {\hbox{and}} \;\; \underline{R}(k) \leq \overline{S}(k).
\ee
Apart from what can then be deduced from (2.13), (2.14) and (2.20), very little seems to be known, though it was shown in \cite{WC} that
$\underline{R}(2) = S(2) = \sqrt{2/3}$. In particular, existence of the limits $R(k)$ does not seem to be known for a single value of $k > 1$. The
subtle difficulty in handling the numbers $N(h,k)$ referred to above is thus fully reflected in the $h(N,k)$. Tables of values computed in \cite{HJ1}
show that $h(N,k)$ is not even a non-decreasing function of $N$.

\setcounter{equation}{0}

\section{Proof of Theorem 1.1}

Let $A$ be a basis for $\mathbb{N}$ of order $h$ with $s$ essential subsets of size $k$,
say $E_1,...,E_s$. We think of $h$ as being fixed and $k,s$ large. Let $E := \cup_{i} E_{i}$,
$E_{0} := E \cup \{0\}$ and,
for each $i$,
\be
d_i := {\hbox{GCD}} \; \{a - a^{\prime} : a,a^{\prime} \in A \backslash E_{i} \}.
\ee
Then each $d_i > 1$ and these numbers are relatively prime (\cite{DF}, Lemma 12).
So if the $d_i$ are in increasing order, then $d_i \geq p_i$, the $i$:th prime. Let $d := \prod_{i} d_i$. Thus,
\be
d \geq \prod_{i=1}^{s} p_i \gtrsim \left( {s \log s \over \alpha} \right)^{s},
\ee
for some absolute constant $\alpha > 0$. This latter estimate for the product of the first $s$ primes is well-known : see, for example, \cite{Rob}.
\\
\\
Next, let $\alpha_{1},...,\alpha_{s}$ be numbers such that $a \equiv
\alpha_{i} \; ({\hbox{mod $d_i$}})$ for all $a \in A \backslash E_i$. Without loss of generality, each $\alpha_{i} = 0$ (otherwise, choose a negative
integer $\alpha$ such that $\alpha \equiv \alpha_{i} \; ({\hbox{mod $d_i$}})$ for each $i$, and replace $A$ by the shifted set $A - \alpha$). Now
since $A$ is a basis for $\mathbb{N}$ of order $h$, the numbers in $E_{0}$ must, when considered modulo $d$, form a
basis for $\mathbb{Z}_{d}$ of order at most $h-1$. Thus
\be
d \leq N(h-1,ks).
\ee
From (3.2), (3.3), (2.9), (2.1) and (2.2) it is easily verified that
\be
s \lesssim (h-1) {\log k \over \log \log k},
\ee
which proves that the right-hand side of (1.5) is asymptotically an upper bound for $E(h,k)$.
\\
\\
For the lower bound, we turn the above argument on its head. Let $h$ be given and $k$ a very large integer. We wish to
construct a subset $A$ of $\mathbb{N}$ which is a basis of order $h$ and has about $(h-1) {\log k \over
\log \log k}$ essential subsets of size $k$. Our example is modeled on that in \cite{DF}, and presented in Section 1.
To begin with, let $s$ be the largest
integer such that
\be
(hs) \cdot \left( \prod_{i=1}^{s} p_i \right)^{{1 \over h-1}} \leq k.
\ee
From (3.2) we have
\be
s \sim (h-1) \cdot \left( {\log k \over \log \log k} \right).
\ee
Put $\mathscr{P} := \prod_{i=1}^{s} p_i$. By the left-hand inequality in (2.1), there exists a set
\\
$F \subseteq
\{1,...,\mathscr{P}-1\}$ with
\be
|F| \leq (h-1) \cdot \mathscr{P}^{{1 \over h-1}} \leq k
\ee
and such that, considered modulo $\mathscr{P}$, $F_{0}$ is a basis for $\mathbb{Z}_{\mathscr{P}}$ of order $h-1$. For each
$i = 1,...,s$, let $F_{i} := \{x \in F : (x,p_i) = 1\}$. Thus $|F_i| \leq k$ for each $i$ also.
We wish to augment the set $F$ to a set $E$, still
contained inside $\{1,...,\mathscr{P}-1\}$, such that
two conditions are satisfied :
\\
\\
(i) $E_{0}$ is still a basis of order $h-1$ for $\mathbb{Z}_{\mathscr{P}}$, i.e.: it is not a basis of strictly smaller order,
\\
(ii) $|E_{i}| = k$, for $i = 1,...,s$.
\\
\\
Note that, for sufficiently large $k$, (i) will follow from (ii) by the choice of $s$. Let $\mathscr{G} :=
\{1,...,\mathscr{P}-1\} \backslash F$ and, for each $i$,
\be
\mathscr{G}_{i} := \{x \in \mathscr{G} : p_i | x \; {\hbox{and}} \; (x,p_j) = 1 \; {\hbox{for all $j \neq i$}} \}.
\ee
Note that the sets $\mathscr{G}_{i}$ are pairwise disjoint and that, from (3.7) and Mertens theorem,
\be
|\mathscr{G}_{i}| = \Theta \left( {\mathscr{P} \over \log s} \right), \;\;\;\; {\hbox{for $i = 1,...,s$}}.
\ee
Put $f_i := |F_i|$. First of all,
add in at most $s-2$ multiples of $p_{s-1} p_s$ from $\mathscr{G}$ to $F$ so that at this point
\be
{\hbox{$\sum_{i=1}^{s} f_i$ is a multiple of $s-1$}}.
\ee
Now we want to throw in $g_i$ elements of $\mathscr{G}_i$ so that, for each $i$,
\be
f_i + \sum_{j \neq i} g_j = k.
\ee
The unique solution to the linear system (3.11) is
\be
g_i = {k + (s-1)f_i - \sum_{i=1}^{s} f_i \over s-1}
\ee
and, by (3.5), (3.7), (3.9) and (3.10), the right-hand side of (3.12) is a positive integer less than $|\mathscr{G}_i|$
for each $i$, as desired. The set $E$ now consists of $F$ together with all the numbers we have thrown in
during the above process and, by construction, it satisfies (ii). Finally, then, let $A \subseteq
\mathbb{N}$ be given by
\be
A = (\mathscr{P} \cdot \mathbb{N}) \cup E.
\ee
Since $E$ is a basis of order $h-1$ for $\mathbb{Z}_{\mathscr{P}}$, it follows that $A$ is an integer
basis of order $h$. By construction, it has $s$ essential subsets of size $k$, namely the sets
$E_1,...,E_s$. From (3.6) we thus have what we want, and so the proof of Theorem 1.1 is complete.

\setcounter{equation}{0}

\section{Proof of Theorem 1.2}

First we consider the upper bound (1.8). As in the previous section, let $A$ be a basis for $\mathbb{N}$ of order $h$ with $s$ essential subsets of size $k$, say $E_1,...,E_s$. This time we think of $k$ as being fixed and $h$ very large. Let
\be
E_i = \{a_{i,j} : j = 1,...,k\}, \;\;\; i = 1,...,s.
\ee
Let the numbers $d_i$ be as in (3.1), $d := \prod_{i} d_i$ and $A^{*} := A \backslash \left( \cup_{i} E_i \right)$. As before, we can argue that, without loss of generality, $a \equiv 0 \; ({\hbox{mod $d$}})$ for all $a \in A^{*}$. Now, with
the numbers $h(\cdot, \cdot)$ defined as in Section 2, we claim that
\be
h \geq \sum_{i=1}^{s} h(d_i,k).
\ee
To see this, first note that, by
definition of the numbers $h(d_i,k)$, there exist integers $x_i$ such that, for each $i$, no representation
\be
x_i \equiv \sum_{j=1}^{k} \gamma_{i,j} a_{i,j} \; ({\hbox{mod $d_i$}})
\ee
exists satisfying
\be
\gamma_{i,j} \in \mathbb{N}_0, \;\;\; \sum_{j} \gamma_{i,j} < h(d_i,k).
\ee
Now let $x$ be any positive
integer satisfying $x \in hA$ and $x \equiv x_i \; ({\hbox{mod $d_i$}})$ for $i=1,...,s$. Since $x \in hA$ there exists a representation
\be
x = \sum_{i=1}^{s} \sum_{j=1}^{k} \gamma_{i,j} a_{i,j} + \sum_{a \in A^{0}} a,
\ee
where $A^{0}$ is some multisubset of $A^{*}$, each $\gamma_{i,j} \geq 0$
and
\be
h = |A^{0}| + \sum_{i,j} \gamma_{i,j}.
\ee
But reducing (4.5) modulo $d_i$ gives a congruence of the form (4.3) for each $i$. Thus (4.2) follows from (4.4) and (4.6).
\\
\\
Now let $h \rightarrow \infty$. Then
\be
h \geq \sum_{i=1}^{s} h(d_i,k) \gtrsim k \cdot \underline{R}(k) \cdot \sum_{i=1}^{s} \sqrt[k]{d_i}
\ee
and
\be
\sum_{i=1}^{s} \sqrt[k]{d_i} \geq \sum_{i=1}^{s} \sqrt[k]{p_i} \gtrsim \sum_{i=1}^{s} \sqrt[k]{i \log i} \\ \gtrsim \int_{1}^{s} (x \log x)^{1/k} \; dx \gtrsim {k \over k+1} (s^{k+1} \log s)^{1/k},
\ee
where the integral has been easily estimated using partial integration. Summarising, we have shown that
\be
(s^{k+1} \log s)^{1/k} \lesssim \left( {k+1 \over k^2} {1 \over \underline{R}(k)} \right) h.
\ee
Choosing the set $A$ so that $s = E(h,k)$, this is easily checked to yield (1.8).
\\
\\
So to the lower bound (1.7). Once again, we wish to turn the above argument on its head. In \cite{P}, the author considered the case $k=1$. To show that the function $E(h,1)$ behaved regularly, he needed to know that every sufficiently large positive integer could be expressed as
$\sum (p-1)$, the sum being over a particular type of set of prime numbers. In the present context, one should think of $p-1$ as being the number
$h(p,1)$. To generalise the argument directly and prove Conjecture 1.3, it would suffice that, for each $k > 1$, every sufficiently large
integer could be expressed as $\sum h(p,k)$, the sum being over a similar set of primes with the additional property that the numbers $R(p,k)$ approach
$\underline{R}(k)$ as $p \rightarrow \infty$. Of course, if we also knew that the limits $R(k)$ existed, then we wouldn't need to worry about the
latter bit. We do not see how to carry out this procedure, given the current state of knowledge about extremal bases in finite cyclic groups, though we
strongly believe it can be done, perhaps with some small modifications.
Instead, we prove the weaker inequality (1.7) by constructing, for all large primes, a large number of bases for
$\mathbb{Z}_p$ all of which are fairly close to extremal (Theorem 4.4). These bases are sufficiently plentiful to allow us to deal easily with further
technical issues concerning the distribution of primes in short intervals (Theorem 4.3). Now to the details. We begin with a pair of lemmas.
\\
\\
The first is a result of Alon and Frieman also used in \cite{P}. Recall the following notations : If $X$ is a finite subset of $\mathbb{N}$ then
$\Sigma(X)$ denotes the collection of all subset sums from $X$. If $q \in \mathbb{N}$ then we denote $X(q) := \{x \in X : q | x\}$. We also set
\be
S_{X} := \sum_{x \in X} x
\ee
and
\be
B_{X} := \sqrt{\sum_{x \in X} x^2}.
\ee
Then there is the following result :

\begin{lem}\cite{AF}
For each $\epsilon > 0$ there exists $n_0 \in \mathbb{N}$ such that if $n \geq n_0$ and $X \subseteq \{1,...,n\}$ satisfies $|X| > n^{2/3 + \epsilon}$
and $|X \backslash X(q)| \geq n^{2/3}$ for each $q \geq 2$, then
\be
\left\{ \lceil {1 \over 2} S_{X} - {1 \over 2} B_{X} \rceil,...,\lfloor {1 \over 2}
S_{X} + {1 \over 2} B_{X} \rfloor \right\} \subseteq \Sigma(X).
\ee
\end{lem}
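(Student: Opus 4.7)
The plan is to prove this via the three-stage ``seed, refine, extend'' strategy that is standard for subset-sum covering problems. The guiding intuition is probabilistic: including each $x \in X$ with probability $1/2$ yields a sum with mean $\tfrac{1}{2}S_X$ and variance $\tfrac{1}{4}B_X^2$, so the target interval is precisely a one-standard-deviation window about the mean. Since $\Sigma(X)$ is symmetric under $t \mapsto S_X - t$, it suffices to cover the upper half.

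Stage 1 (seed progression). I would first extract a moderate subset $X_0 \subseteq X$ (of size a small power of $n$) and invoke a Freiman- or S\'arkozy-type theorem to conclude that $\Sigma(X_0)$ contains an arithmetic progression $P_0$ of length at least $c|X_0|^2$ and some common difference $q_0$. The density condition $|X| > n^{2/3 + \epsilon}$ is calibrated so that this seed progression has length polynomially larger than $n$, which gives room for absorbing further elements later.

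Stage 2 (refining the common difference down to $1$). Next, I would iteratively absorb elements of $X \setminus X_0$ to shrink $q_0$ to $1$. The mechanism is: if the current covered set contains an AP of common difference $q$ and length $L$, and $x \in X$ satisfies $\gcd(x, q) = q' < q$, then $P \cup (P + x)$ is contained in $\Sigma$ of the enlarged set and forms an AP of common difference $q'$, provided $L$ is comfortably larger than $x$. The hypothesis $|X \setminus X(q)| \geq n^{2/3}$ for every $q \geq 2$ guarantees uniformly that such an $x$ exists for every nontrivial divisor of the current common difference, and after finitely many absorption steps this produces a genuine consecutive interval $I \subseteq \Sigma(X')$ for some intermediate $X' \subseteq X$.

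Stage 3 (extension to the full window). Finally, I would sweep in the remaining elements of $X \setminus X'$ in a convenient order (say, increasing). Each new element $x$ that is no larger than the current interval length can be either kept or dropped, which extends the reachable interval by $\pm x/2$ about its center; summing these extensions and tracking second moments yields a covered interval of length at least $B_X$ centered at $\tfrac{1}{2}S_X$. The main obstacle is Stage 2: one must carefully bookkeep the loss in effective progression length caused by each common-difference refinement and verify that $|X \setminus X(q)| \geq n^{2/3}$ continues to furnish usable elements at every intermediate divisor. Stage 1 can be invoked as a black box from Freiman-type density theorems and Stage 3 is essentially a second-moment calculation, but Stage 2 is where the specific hypothesis on $|X \setminus X(q)|$ does the real work and where the interplay between progression length and absorbed element size has to be tracked explicitly.
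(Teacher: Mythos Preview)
The paper does not prove this lemma at all: it is quoted verbatim from Alon--Freiman \cite{AF} and used as a black box. So there is no ``paper's own proof'' to compare against, only the original Alon--Freiman argument.

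Your three-stage outline is in the right spirit, but Stage~2 as written contains a genuine error. You claim that if $P$ is an arithmetic progression of common difference $q$ and $x$ satisfies $\gcd(x,q) = q' < q$, then $P \cup (P+x)$ is an arithmetic progression of common difference $q'$. This is false: take $q=6$, $x=2$, $q'=2$, and $P = \{0,6,12,\dots\}$; then $P \cup (P+2) = \{0,2,6,8,12,14,\dots\}$ misses $4,10,16,\dots$ and is not an AP of difference $2$. A single translate only reduces the common difference by a factor of $2$ in the best case; to go from difference $q$ down to $q'$ one needs roughly $q/q'$ translates, which means you must absorb not one but on the order of $q/q'$ elements at each refinement step, all lying in suitable residue classes and all small enough relative to the current progression length. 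The hypothesis $|X \setminus X(q)| \geq n^{2/3}$ does supply many such elements, but your sketch does not account for the cumulative cost of these absorptions, and the bookkeeping of how many elements are consumed versus how much progression length is lost is exactly the delicate part of the Alon--Freiman argument. Stage~1 is also left entirely as a black-box invocation, whereas in the original paper the construction of the seed progression is itself a substantial piece of work rather than a direct citation of a Freiman or S\'ark\"ozy theorem.
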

Our second lemma will be a rather general result about the representability of sufficiently large integers as a certain type of subset sum in a
sufficiently dense multisubset of $\mathbb{N}$. Here we need to make precise some terminology.
\\
\\
By a {\em multisubset} $A$ of $\mathbb{N}$ we mean a collection of positive integers where repititions are allowed. We assume that each integer occurs
only finitely many times in a multisubset. If $a_1 \leq a_2 \leq …$ are the elements of $A$ written in some non-decreasing order, then we denote this
by $A = (a_i)$. We shall say that $A$ is {\em weakly increasing} if the following holds : for each $\epsilon > 0$ there exists $\delta > 0$ such that,
for all $n >>_{\epsilon} 0$,
\be
{a_{\lfloor (1+\epsilon)n \rfloor} \over a_n} > 1 + \delta.
\ee
If $A$ is a multisubset of $\mathbb{N}$ we denote by
$A^{\#}$ the subset of $\mathbb{N}$ consisting of all those numbers which appear at least once in $A$. Now recall that if $X \subseteq \mathbb{N}$, the {\em lower asymptotic density} of $X$, denoted $\underline{d}(X)$, is defined as
\be
\underline{d}(X) = \liminf_{n \rightarrow \infty} {|X \cap [1,n]| \over n}.
\ee
Our lemma is the following :

\begin{lem}
Let $A = (a_i)$ be a weakly increasing multisubset of $\mathbb{N}$ such that $\underline{d}(A^{\#}) = 1$. Let $\epsilon > 0$. Then for all $h
>>_{\epsilon} 0$, there exists some representation of $h$ as a sum
\be
h = \sum_{i=1}^{n} a_i + \sum_{j \in \mathscr{J}} a_j,
\ee
where $\mathscr{J} \subseteq A^{\#} \cap [a_n,(1+\epsilon)a_n]$. Here $n$ depends on $h$, but $n \rightarrow \infty$ as $h \rightarrow \infty$.
\end{lem}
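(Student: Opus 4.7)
The strategy is to apply Lemma 4.1 to the ``slice'' $X_n := A^{\#} \cap [a_n, (1+\epsilon) a_n]$ for each sufficiently large $n$, and then, given a large $h$, to choose $n$ so that $h - T_n$ lies in the long interval of subset sums thereby produced, where $T_n := \sum_{i=1}^n a_i$. Any such $n$ immediately yields the representation $h = T_n + \sum_{j \in \mathscr{J}} a_j$ with $\mathscr{J} \subseteq X_n$.

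First I verify the hypotheses of Lemma 4.1 for $X_n$ when $n$ is large. Since $\underline{d}(A^\#) = 1$,
\[ |X_n| \geq |A^\# \cap [1, (1+\epsilon) a_n]| - a_n \geq (\epsilon - o(1)) a_n, \]
which exceeds $a_n^{2/3 + \epsilon'}$ for any fixed $\epsilon' < 1/3$. The multiples of any $q \geq 2$ in $[a_n, (1+\epsilon)a_n]$ number at most $\epsilon a_n/q + 1 \leq \epsilon a_n/2 + 1$, so $|X_n \setminus X_n(q)| \geq (\epsilon/2) a_n (1 - o(1)) \gg a_n^{2/3}$ uniformly in $q$. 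Lemma 4.1 then gives
\[ \Sigma(X_n) \supseteq \bigl[\lceil S_n/2 - B_n/2 \rceil, \lfloor S_n/2 + B_n/2 \rfloor\bigr], \]
and routine bounds produce $S_n = \Theta_\epsilon(a_n^2)$ and $B_n = \Theta_\epsilon(a_n^{3/2})$.

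Set $M_n := T_n + S_n/2$ and $W_n := [M_n - B_n/2, M_n + B_n/2]$, so that every integer in $W_n$ is representable. The proof reduces to showing that $\bigcup_n W_n$ contains every sufficiently large integer, from which $n \to \infty$ as $h \to \infty$ follows automatically, since each $W_n$ is bounded. To this end I study the increments $M_{n+1} - M_n = a_{n+1} + \tfrac{1}{2}(S_{n+1} - S_n)$: a direct computation on the symmetric difference of $[a_n, (1+\epsilon) a_n]$ and $[a_{n+1}, (1+\epsilon) a_{n+1}]$ yields $|S_{n+1} - S_n| = O_\epsilon \bigl(a_n (a_{n+1} - a_n)\bigr)$, and a density argument on gaps in $A^\#$ (namely, a gap of size $g$ between two consecutive values of $A^\#$ near $N$ contributes $g$ missing integers to $[1,N]$) forces $a_{n+1}/a_n \to 1$. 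Combined with the weakly increasing hypothesis, which rules out long plateaux of $(a_n)$ and forces a polynomial lower bound on its growth, this produces $M_{n+1} - M_n \leq B_n$ for all large $n$, and consecutive windows chain together.

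The principal technical hurdle is the last bound: the density hypothesis alone only gives $a_{n+1} - a_n = o(a_n)$, whereas controlling $|S_{n+1} - S_n|$ by $B_n = \Theta(a_n^{3/2})$ requires a gap bound of order $a_n^{1/2}$. I expect the weakly increasing assumption to enter precisely here, ensuring that indices at which such large gaps occur form a sparse set. Should a fully direct argument prove elusive, a robust workaround is to pass to the subsequence of ``regular'' indices along which the gap bound does hold, and then to invoke an Intermediate Value Theorem argument on the map $n \mapsto T_n + c a_n^2$ (for a suitable $c \in (c_1, c_2)$ where $[c_1 a_n^2, c_2 a_n^2]$ is a safe sub-interval of $[S_n/2 - B_n/2, S_n/2 + B_n/2]$) to verify that every large $h$ is reached through this subsequence.
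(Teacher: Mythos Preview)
Your overall strategy---apply Lemma~4.1 to a slice of $A^{\#}$ near $a_n$, then chain the resulting windows of representable integers as $n$ varies---is exactly the paper's. The substantive difference, and the source of the hurdle you correctly flag, is how the right endpoint of the slice is parameterised.

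You take $X_n = A^{\#} \cap [a_n,(1+\epsilon)a_n]$, anchoring the right endpoint by \emph{value}. This forces you to bound $|S_{n+1}-S_n|$ by $O\bigl(a_n(a_{n+1}-a_n)\bigr)$, and the density hypothesis only yields $a_{n+1}-a_n=o(a_n)$, not the $O(a_n^{1/2})$ you would need to compare against $B_n$. Your suggested workarounds (passing to a subsequence of ``regular'' indices, an IVT argument on $n\mapsto T_n+ca_n^2$) are not developed enough to stand on their own, and it is not clear they can be completed without extra hypotheses.

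The paper sidesteps the difficulty by anchoring the right endpoint by \emph{index}: it sets
\[
A^{\#}_n := A^{\#}\cap\bigl[a_n,\,a_{\lfloor(1+\epsilon)n\rfloor}\bigr].
\]
As $n\mapsto n+1$ the index $\lfloor(1+\epsilon)n\rfloor$ increases by at most $2$, so at most two terms enter the slice at the top and at most one leaves at the bottom. Hence
\[
u_{n+1}-u_n \leq a_{n+1}+\tfrac12\bigl(a_{n'+1}+a_{n'+2}\bigr)=O(a_n),
\]
which is comfortably below $B_{A^{\#}_n}=\Omega(a_n^{3/2})$, and the windows chain with room to spare. The weakly increasing hypothesis is then used only once, to ensure $a_{\lfloor(1+\epsilon)n\rfloor}/a_n>1+\delta$ so that the slice is genuinely long and Lemma~4.1 applies.

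One caveat worth noting: with this choice the paper actually produces $\mathscr{J}\subseteq A^{\#}\cap[a_n,a_{\lfloor(1+\epsilon)n\rfloor}]$, i.e.\ an index interval $\{n+1,\dots,\lfloor(1+\epsilon)n\rfloor\}$ rather than the value interval $[a_n,(1+\epsilon)a_n]$ written in the statement. This is precisely what the downstream application (Theorem~4.3) requires, so the discrepancy is cosmetic; but it explains why your attempt to match the stated value interval literally runs into trouble. In short: replace your value-based endpoint $(1+\epsilon)a_n$ by the index-based endpoint $a_{\lfloor(1+\epsilon)n\rfloor}$, and your technical hurdle disappears.
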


\begin{proof}
Fix $\epsilon > 0$. For each $n > 0$ set
\be
A^{\#}_{n} := A^{\#} \cap [a_n,a_{\lfloor (1+\epsilon)n \rfloor}].
\ee
Now define the sequence $(u_n)_{n=1}^{\infty}$ by
\be
u_n := \sum_{i=1}^{n} a_i + {1 \over 2} \sum_{j \in A^{\#}_{n}} a_j.
\ee
The sequence $u_n$ is evidently increasing and,
if $n^{\prime} := \lfloor (1+\epsilon)n \rfloor$, then
\be
u_{n+1} - u_n \leq a_{n+1} + {1 \over 2} \left( a_{n^{\prime}+1} + a_{n^{\prime}+2} \right).
\ee
Since $\underline{d}(A^{\#}) = 1$, it follows that
\be
u_{n+1} - u_n \leq (1+O(\epsilon))a_{n}.
\ee
Now let $h$ be a very large integer (how large
$h$ needs to be will become clear in what follows). Let $n$ be the largest integer such that $u_n < h$. Put $h^{\prime} = h - u_n$. By (4.19) we have
that, in the notation of (4.10),
\be
\left| h^{\prime} - {1 \over 2} S_{A^{\#}_{n}} \right| = O(a_n).
\ee
Since $A$ is weakly increasing, when $h$ and
thus $n$ are sufficiently large, there exists $\delta > 0$ such that
\be
{a_{\lfloor (1+\epsilon)n \rfloor} \over a_n} > 1 + \delta.
\ee
Furthermore, since $\underline{d}(A^{\#}) = 1$ then for any $\delta^{\prime} > 0$ and $h >> 0$, the set $A^{\#}_{n}$ will contain at least the fraction $1-\delta^{\prime}$ of all numbers in the interval $[a_n,a_{\lfloor (1+\epsilon)n \rfloor}]$. What all of this means is that $A^{\#}_{n}$ will satisfy the hypotheses of Lemma 4.1 and moreover that, in the notation of (4.11), $B_{A^{\#}_{n}} = \Omega(a_{n}^{3/2})$. Hence, by Lemma 4.1 and (4.20) it follows that, provided $h$ is sufficiently large, there is a subset $\mathscr{J} \subseteq A^{\#}_{n}$ such that $h^{\prime} = \sum_{a_j \in \mathscr{J}} a_j$. From the definition of $h^{\prime}$, this implies (4.15) and so the proof of the lemma is complete.
\end{proof}

Let $\mathbb{P} = (p_i)$ denote the sequence of primes, as usual. We now have :

\begin{thm}
Let $k$ be a positive integer and $\epsilon > 0$. Then for all integers $h >>_{k,\epsilon} 0$, there exists a representation
\be
h = \sum_{i=1}^{n} \lfloor \sqrt[k]{p} \rfloor + \sum_{j \in \mathscr{J}} \lfloor \sqrt[k]{p_j} \rfloor,
\ee
where $\mathscr{J} \subseteq \{n+1,...,\lfloor (1+\epsilon)n \rfloor \}$.
\end{thm}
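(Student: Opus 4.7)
The plan is to apply Lemma 4.2 to the multisubset $A=(a_i)_{i\geq 1}$ of $\mathbb{N}$ defined by $a_i:=\lfloor\sqrt[k]{p_i}\rfloor$. After a harmless shrinkage of the parameter $\epsilon$, the conclusion of that lemma produces a representation of the form (4.15) in which $\mathscr{J}\subseteq A^{\#}\cap[a_n,a_{\lfloor(1+\epsilon)n\rfloor}]$; one then passes to (4.22) by selecting, for each $j\in\mathscr{J}$, a prime-index $j'\in\{n+1,\ldots,\lfloor(1+\epsilon)n\rfloor\}$ with $\lfloor\sqrt[k]{p_{j'}}\rfloor=j$. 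Such a $j'$ exists whenever $j\geq a_n+1$, since $j^k\geq(a_n+1)^k>p_n$ forces any prime in $[j^k,(j+1)^k)$ to have index greater than $n$; distinct values of $j$ yield distinct indices because the intervals $[j^k,(j+1)^k)$ are pairwise disjoint (the boundary value $j=a_n$ is absorbed into the initial block by a shift of $n$ by one).

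To invoke Lemma 4.2, I first verify that $A$ is weakly increasing. By the Prime Number Theorem, $p_i\sim i\log i$, hence $a_i\sim(i\log i)^{1/k}$, and therefore
\[
\frac{a_{\lfloor(1+\epsilon)n\rfloor}}{a_n}\;\longrightarrow\;(1+\epsilon)^{1/k}\;>\;1\qquad(n\to\infty),
\]
which gives (4.13) with any $\delta$ strictly less than $(1+\epsilon)^{1/k}-1$.

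Next I verify $\underline{d}(A^{\#})=1$. An integer $m$ lies in $A^{\#}$ exactly when the interval $[m^k,(m+1)^k)$, of length $\sim km^{k-1}$, contains a prime. For $k\geq 3$ one has $(k-1)/k>0.525$, so the Baker--Harman--Pintz theorem provides a prime in every such interval once $m$ is large, whence $A^{\#}$ eventually coincides with $\mathbb{N}$. For $k=2$, the length $2m+1\sim 2\sqrt{x}$ (with $x=m^2$) sits exactly at the threshold where Legendre's conjecture would be needed for a universal statement; however, only an almost-all version is required here, and classical short-interval results (Selberg's variance bound, sharpened by Hoheisel and Heath-Brown) show that the number of $m\leq M$ for which $[m^2,(m+1)^2)$ is prime-free is $o(M)$, hence $\underline{d}(A^{\#})=1$. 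The case $k=1$ formally lies outside Lemma 4.2 since $\underline{d}(\mathbb{P})=0$, but the argument inside its proof carries through essentially unchanged: for $X=\mathbb{P}\cap[p_n,p_{\lfloor(1+\epsilon)n\rfloor}]$ one has $|X|\sim\epsilon n$ and $N:=\max X\sim n\log n$, so the hypotheses of Lemma 4.1 are easily met, and $B_X\sim\sqrt{\epsilon n}\cdot n\log n\gg a_n$ is more than enough to locate $h-u_n$ inside the subset-sum interval $[\tfrac{1}{2}S_X-\tfrac{1}{2}B_X,\tfrac{1}{2}S_X+\tfrac{1}{2}B_X]$ furnished by Lemma 4.1.

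The main technical obstacle is the boundary case $k=2$, which lies exactly at the known limit of unconditional results on primes in short intervals; fortunately we need only an almost-all statement rather than the open Legendre conjecture, so the obstacle is surmountable via standard sieve/variance inputs. All other ingredients (PNT, Baker--Harman--Pintz, and a direct verification of Lemma 4.1's hypotheses in the case $k=1$) are routine.
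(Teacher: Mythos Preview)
Your proposal is correct and follows essentially the same route as the paper: apply Lemma~4.2 to the multisubset $a_i=\lfloor\sqrt[k]{p_i}\rfloor$, verifying the weakly-increasing hypothesis via the PNT and the density-one hypothesis for $A^{\#}$ via standard short-interval results on primes (the paper cites Heath-Brown for $k\geq 3$ and Jia's almost-all result for $k=2$, in place of your Baker--Harman--Pintz and Selberg-type references, but the content is the same). You are in fact a bit more careful than the paper on two points the paper glosses over: the passage from the value-indexed set $\mathscr{J}\subseteq A^{\#}$ in Lemma~4.2 to the prime-indexed set $\mathscr{J}\subseteq\{n+1,\ldots,\lfloor(1+\epsilon)n\rfloor\}$ in the theorem, and the case $k=1$, which the paper's proof does not address (it is only needed for Plagne's already-established $k=1$ case of Theorem~1.2).
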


\begin{proof}
Fix $k$ and $\epsilon$. Let $A$ denote the multisubset of $\mathbb{N}$ consisting of the integer parts of the $k$:th roots of all the primes. To prove
the theorem, we just need to verify that $A$ satisfies the hypotheses of Lemma 4.2. Clearly, $A$ is weakly increasing. It is also the case that
$\underline{d}(A^{\#}) = 1$, in other words, that almost every positive integer is the integer part of the $k$:th root of some prime. While it is
generally believed that, in fact, $A^{\#} = \mathbb{N}$, for any $k \geq 2$, what is known for certain is that $\mathbb{N} \backslash A^{\#}$ is finite
for any $k \geq 3$, and that $\underline{d}(A^{\#}) = 1$ for $k = 2$. These facts are easy consequences of the following two well-known theorems
respectively (in each case the exponents given are the smallest that have been arrived at to date, to the best of our knowledge) :
\\
\\
{\sc Result 1 \cite{H-B}} : As $n \rightarrow \infty$ one has \be \pi(n + t) - \pi(n) \sim {t \over \log n}, \ee whenever $n^{7/12} \leq t \leq n$.
\\
\\
{\sc Result 2 \cite{J}} : For each $\epsilon > 0$ there is a prime in the interval $(n,n+n^{1/20 + \epsilon})$ for almost all positive integers $n$.
\\
\\
Thus our set $A$ does indeed satisfy the hypotheses of Lemma 4.2, and thus the proof of Theorem 4.3 is complete.
\end{proof}

The above will take care of the technicalities arising from the distribution of the primes. We now turn to the construction of reasonably efficient
bases in finite cyclic groups.

\begin{thm}
Let $k \geq 2$ be an integer. There exists an absolute constant $c > 0$, independent of $k$, such that, for all primes $p >>_{k} 0$, and all $s$ such
that $0 \leq s < c \lfloor \sqrt[k]{p} \rfloor$, there exists a set $A$ of $k$ non-zero elements of $\mathbb{Z}_{p}$ such that $A \cup \{0\}$ is a
basis for $\mathbb{Z}_{p}$ of order $k \cdot \lfloor \sqrt[k]{p} \rfloor + s$.
\end{thm}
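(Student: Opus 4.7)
The plan is to construct, for each admissible $s$, an explicit $k$-element subset $A_{s}$ of $\mathbb{Z}_{p}\setminus\{0\}$ such that $A_{s}\cup\{0\}$ is a basis of $\mathbb{Z}_{p}$ of order exactly $km+s$, where $m:=\lfloor\sqrt[k]{p}\rfloor$. The backbone is the Wang-Coppersmith-type geometric progression that underlies (2.16): setting $n:=m+1$ and $A^{*}:=\{1,n,n^{2},\ldots,n^{k-1}\}$, every residue in $\mathbb{Z}_{p}$ has a base-$n$ expansion with digit sum at most $k(n-1)=km$, so $A^{*}\cup\{0\}$ is already a basis of order $\leq km$. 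Thus the case $s=0$ would follow once one checks, by exhibiting a residue whose shortest representation has length exactly $km$, that the order is not strictly smaller for $p\gg_{k}0$; a natural candidate is the residue whose canonical base-$n$ expansion is $(m,m,\ldots,m)$, and one would need to rule out shorter representations using the relation $n^{k}\equiv n^{k}-p\pmod{p}$.

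For $s>0$ I would perturb the top generator and consider the one-parameter family
\[
A_{u} := \{1,\,n,\,n^{2},\,\ldots,\,n^{k-2},\,v(u)\},
\]
where $v(u)$ is a controlled perturbation of $n^{k-1}$, for instance $v(u)=n^{k-1}+u$ with $u$ in a suitable subset of $\mathbb{Z}_{p}$. The heuristic is that replacing the largest generator by a slightly non-optimal element forces certain residues to require longer representations, inflating the order in small increments governed by $u$.

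The main steps would then be: (i) analyze the minimum representation length
\[
\nu_{A_{u}}(r) := \min\Bigl\{\textstyle\sum_{i} c_{i}\,:\,c_{0}+c_{1}n+\cdots+c_{k-2}n^{k-2}+c_{k-1}v(u)\equiv r\pmod{p},\ c_{i}\in\mathbb{N}_{0}\Bigr\}
\]
as a function of $r$; (ii) derive tight bounds for $\max_{r\in\mathbb{Z}_{p}}\nu_{A_{u}}(r)$ in terms of $u$; (iii) verify that, as $u$ ranges over a suitable set of cardinality $\geq cm$, this maximum takes each integer value in $[km,\,km+cm)$ at least once. The upper bound on the order would come from a careful digit analysis with the perturbed generator, while the matching lower bound would come from exhibiting a specific residue (typically near $-1\equiv p-1$ or $n^{k}-1\pmod{p}$) requiring exactly $km+s$ summands.

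The principal obstacle is step (iii): hitting \emph{every} integer in $[km,\,km+cm)$, not just a cofinal or arithmetic subset. I expect to need a discrete intermediate-value property — that consecutive values of $u$ change the order by at most one — which in turn hinges on the arithmetic of $n^{k}\bmod p$, equivalently on the residue gap $p-m^{k}$, which behaves irregularly. If the one-parameter family turns out to be too rigid, the fallback is to perturb two generators simultaneously; this adds bookkeeping but enlarges the set of reachable orders, and the resulting flexibility should be more than sufficient to cover the target range for a uniform absolute constant $c>0$ independent of $k$.
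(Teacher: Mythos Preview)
Your overall plan is the same as the paper's: start from a geometric-progression basis $\{1,x,x^{2},\ldots,x^{k-1}\}$, perturb only the top generator, and then establish a discrete intermediate-value property so that the order sweeps through $\Theta(x)$ consecutive integers. You have correctly identified step (iii) as the crux, and the paper's proof confirms that a one-parameter family already suffices---the two-generator fallback is not needed.

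Where your proposal is vague is exactly where the paper does the real work, and your specific choices would make that work harder. Two points:

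\begin{enumerate}
\item The paper takes the base $x=m=\lfloor p^{1/k}\rfloor$ rather than $n=m+1$. Since $x^{k}<p$, the order of $\{0,1,x,\ldots,x^{k-1}\}$ is $kx-k+u$, where $u$ is the smallest integer with $x^{k}+(u+1)x^{k-1}-2\geq p-1$; one gets $u\in\{0,\ldots,k\}$ directly from $p-x^{k}=O(x^{k-1})$, so the $s=0$ case (indeed the whole initial segment) falls out without having to analyze shortest representations of a specific ``hard'' residue modulo the wrap-around relation, which is the part you flagged as delicate.
\item More importantly, the paper does \emph{not} shift the top generator by units. It replaces $x^{k-1}$ by $x^{k-2}(x-t)$, i.e.\ perturbs in steps of size $x^{k-2}$. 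With this scaling the order becomes $(kx-k)+v_{t}$ where $v_{t}=\lceil f(t)\rceil$ for an explicit rational function
\[
f(\xi)=\frac{p+1-x^{k}+\xi x^{k-1}}{x^{k-2}(x-\xi)}-(\xi+1),
\qquad
f'(\xi)=\frac{p+1}{x^{k-2}(x-\xi)^{2}}-1.
\]
Then $f'(\xi)=(1+o_{p}(1))(1-\xi/x)^{-2}-1$, so $f$ is increasing and $f'(\xi)\leq 1+o_{p}(1)$ for $\xi/x\leq 1-1/\sqrt{2}$. That derivative bound is precisely the intermediate-value property you were looking for: it forces $v_{t+1}-v_{t}\in\{0,1\}$ throughout a range of length $\Theta(x)$, and since $f'=\Theta(1)$ there the values actually advance. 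Your unit-step perturbation $v(u)=n^{k-1}+u$ would put the analogous function on the wrong scale (its increments would be $O(1/x)$ per step, so the order would stay constant over long stretches and then jump), making the bookkeeping for ``every integer is hit'' considerably messier; choosing the step size $x^{k-2}$ is what makes the derivative land in $[0,1]$.
\end{enumerate}

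So your outline is correct, but the proof goes through cleanly only after the two calibration choices above; with them, no second perturbation parameter is needed.
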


\begin{rek}
This is overkill for our purposes. It would suffice for us to know that there exist $(k+1)$-element bases for $\mathbb{Z}_p$ of order $k \cdot \lfloor
\sqrt[k]{p} \rfloor + s$ for each $s \in \{0,1\}$. But we think the result as stated may be of independent interest - see Section 5.
\end{rek}

\begin{proof}
Fix $k \geq 2$ and let $p$ be a prime. Let $x := \lfloor \sqrt[k]{p} \rfloor$ and $\epsilon := \sqrt[k]{p} - x$. Thus $\epsilon \in (0,1)$. Our goal is to construct, for some constant $c > 0$ and all each $s \in \{0,1,...,\lfloor cx \rfloor\}$,
a subset $A \subseteq \mathbb{Z}_{p}^{\times}$ of size $k$ such that $A_{0} := A \cup \{0\}$ is a basis
for $\mathbb{Z}_{p}$ of order $kx+s$. By the binomial theorem,
\be
x^{k} = p - \sum_{j=1}^{k} \left( \begin{array}{c} k \\ j \end{array} \right) \epsilon^{j} x^{k-j}.
\ee
In particular, it is clear that, for $p >> 0$ we will have
\be
p - (k+1)x^{k-1} < x < p.
\ee
First consider $A := \{1,x,x^{2},...,x^{k-1}\}$. Then $A_{0}$ is a basis of order $(kx - k) + u$, where $u$ is the smallest integer such that
\be
(x-2) \cdot 1 + (x-1) \cdot x + (x-1) \cdot x^{2} + \cdots + (x-1) \cdot x^{k-2} + (x+u) \cdot x^{k-1} \geq p-1.
\ee
The left-hand side of (4.26) is just $x^{k} + (u+1)x^{k-1} - 2$. Hence, if $p >> 0$, (4.25) implies that $0 \leq u \leq k$. Thus $A_{0}$ is a basis of order $kx-j$ for some $j \in \{0,1,...,k\}$. Now let $t$ be any
integer and consider
\be
A_{t} := \{1,x,x^{2},...,x^{k-2},x^{k-2}(x-t)\}.
\ee
If $t$ is small compared to $x$ then $A_{t,0}$ will be a basis of order
$(kx-k)+(u_{t}-t)$, where $u_{t}$ is the smallest integer such that
\be
(x-2) \cdot 1 + (x-1) \cdot x + (x-1) \cdot x^{2} + \cdots + (x-1) \cdot x^{k-3} + (x-t-1) \cdot x^{k-2} + (x+u_{t}) \cdot x^{k-2}(x-t) \geq p-1.
\ee
Let $v_{t} := u_{t} - t$. We have already seen above that $0 \leq v_{0} \leq k$.
The theorem will be proved if we can show that there are values of $t$ for which $v_{t}$ takes on each of the values $k,k+1,...,k+\lfloor cx \rfloor$, for some absolute constant $c > 0$. After some tedious computation where we make use of (4.24), the inequality (4.28) reduces to
\be
x^{k-2}\left[(u_{t}+1)(x-t) - tx\right] \geq 1 + \sum_{j=1}^{k} \left( \begin{array}{c} k \\ j \end{array} \right) \epsilon^{j} x^{k-j}.
\ee
Note that the right-hand side is independent of $t$. Denote it simply by $\Sigma$ and note from (4.24) that $x^{k} + \Sigma = p+1$. Then from (4.29) we easily deduce that $v_{t} = \lceil f(t) \rceil$, where the real-valued function $f$ of one variable is given by
\be
f(\xi) = {\Sigma + \xi x^{k-1} \over x^{k-2}(x-\xi)} - (\xi+1).
\ee
One easily computes that
\be
f^{\prime}(\xi) = {p+1 \over x^{k-2}(x-\xi)^{2}} - 1,
\ee
hence that
\be
f^{\prime}(\xi) = {1 + o_{p}(1) \over (1 - \xi/x)^{2}} - 1.
\ee
Thus $f$ is increasing in the range $0 \leq \xi < x$, $f^{\prime}(\xi) = \Theta(1)$ when $\xi = \Theta(x)$ and $f^{\prime}(\xi)
\leq 1 + o_{p}(1)$ when $\xi/x \leq 1 - 1/\sqrt{2}$. It follows easily that, as $t$ increases, the integer-valued quantity $v_{t}$ takes on a sequence of $\Theta(x)$ consecutive values, starting at $v_{0}$. This suffices to prove Theorem 4.4.
\end{proof}

Now we are ready to prove inequality (1.7). Let $k \geq 2$ be a fixed integer. Let $h$ be a positive integer and write $h = kh_1 + s$ where $0 \leq s < k$. Let $\epsilon > 0$. If $h >>_{\epsilon,k} 0$ then, by Theorem 4.3 there exists a representation
\be
h_1 = \sum_{i=1}^{n} \lfloor \sqrt[k]{p_{i}} \rfloor + \sum_{j \in \mathscr{J}} \lfloor \sqrt[k]{p_{j}} \rfloor,
\ee
where $\mathscr{J} \subseteq \{n+1,...,\lfloor (1+\epsilon)n \rfloor\}$. For each
prime $p_{i} > p_{k}$ in this sum we wish to choose a $k$-element subset $A_{i}$ of $\{1,2,...,p_i - 1\}$ such that, if we identify $A_{i}$ with a subset of $\mathbb{Z}_{p}$ and let $r_{i}$ denote the order of $A_{i} \cup \{0\}$ as a basis for $\mathbb{Z}_{p}$, then
\be
r_{i} = k \cdot \sqrt[k]{p_{i}} + O(1),
\ee
and
\be \sum r_{i} = h.
\ee
From (4.33) and Theorem 4.4 (see Remark 4.5 in fact), it is clear that such a choice is possible, for sufficiently large $h$. Set
$\mathscr{I} := \{1,...,n\} \cup \mathscr{J}$, $\mathscr{P} := \prod_{i \in \mathscr{I}} p_{i}$ and, for each $i$, $\mathscr{P}_{i} :=
\mathscr{P}/p_{i}$. For each $i \in \mathscr{I} \backslash \{1,...,k\}$ set
\be
A_{i} := \{a_{i,j} : j = 1,...,k\},
\ee
and
\be
E_{i} := \{ a_{i,j} \mathscr{P}_{i} : j = 1,...,k \}.
\ee
Now consider the subset $A \subseteq \mathbb{N}$ given by
\be A = \mathscr{P} \cdot \mathbb{N} \cup \left( \bigcup_{i \in \mathscr{I} \backslash \{1,…,k\}} E_{i} \right).
\ee
By construction, the set $A$ is a basis for $\mathbb{N}$ of order $h$ and contains
$|\mathscr{I}| - k$ essential subsets of size $k$, namely each of the sets $E_{i}$. The proofs of these assertions are similar to those of the
corresponding assertions in \cite{P} (see page 9 of that paper), so we do not include them. For the purpose of obtaining the right-hand side of (1.7)
as a lower bound for the asymptotic behavior of $E(h,k)$, it now suffices to show that
\be
|\mathscr{I}| - k \geq (1 - O(\epsilon)) \left( {k+1 \over k^2} \cdot \sqrt[k+1]{k} \right) \left( {h^{k} \over \log h} \right)^{{1 \over k+1}}.
\ee
First, it is obvious that
\be
|\mathscr{I}| - k = (1 + O(\epsilon))n.
\ee
Second, it follows from (4.33) and (4.34) that
\be
h \leq  (1 + O(\epsilon)) \cdot k \cdot \sum_{i = 1}^{\lfloor (1 + \epsilon)n \rfloor} \sqrt[k]{p_{i}}.
\ee
Hence if we can show that
\be
\sum_{i=1}^{n} \sqrt[k]{p_{i}} \sim {k \over k+1} \left( n^{k+1} \log n \right)^{1/k},
\ee
then this and (4.40)-(4.41) are easily seen to imply (4.39). But (4.42) has already been established in (4.8), and so our proof of Theorem 1.2 is complete.

\setcounter{equation}{0}

\section{Discussion}

We have seen that an entirely satisfactory estimate for the function $E(h,k)$ cannot be obtained without significant progress on the
Postage Stamp Problem in the case when $k$ is fixed and $h \rightarrow \infty$. Specifically, one needs to know the numbers
$\underline{R}(k)$ given by (2.18). Even then, a subtle technicality arises in attempting to rigorously prove Conjecture 1.3, as
was alluded to in Section 4. It is possible, though highly unlikely, that not all sufficiently large integers can be expressed
as sums $\sum h(p,k)$ over certain sets of primes, as in Theorem 4.3. For example, it could happen that $h(p,k)$ was a multiple of $k$ for every $p$. Note that the upper bound in (2.16) has this property, and it was just this fact that necessitated the long detour via Theorem 4.4 when trying to prove (1.7). Theorem 4.4 may be independently interesting in the sense that one can ask a very general question as to what are the possible
orders of an arbitrary $(k+1)$-element basis for $\mathbb{Z}_{n}$. A special case would be to ask for the best-possible $c$ in the statement of that theorem. Does $c \rightarrow \infty$ as $p$ does ?
For the proof of Conjecture 1.3 one would instead like to know what is the largest possible $C = C(p,k)$ such that
there exists a $(k+1)$-element basis for $\mathbb{Z}_{p}$ of order $h(p,k) + s$, for every $0 \leq s \leq C(p,k)$. Can we take
\\
$C(p,k) = \Omega \left( \sqrt[k]{p} \right)$ ?

\section*{Acknowledgement}

I thank Alain Plagne for very helpful discussions and Melvyn Nathanson for some literature tips on the PSP. This work was completed while I
was visiting City University of New York, and I thank them for their hospitality. My research is partly supported by a grant from the
Swedish Research Council (Vetenskapsr\aa det).

\ \\

\end{document}